\renewcommand{\leq}{\leqslant}
\renewcommand{\geq}{\geqslant}
\newcommand{\bbf}{\mathbb{F}}
\newcommand{\bbz}{\mathbb{Z}}
\newcommand{\bbr}{\mathbb{R}}
\newcommand{\bbq}{\mathbb{Q}}
\newcommand{\fk}{\mathbb{F}_p(\!(t^{-1})\!)}
\newcommand{\Pol}[1]{[#1]}
\newcommand*{\bbe}{
  \mathop{
    \mathchoice{\vcenter{\hbox{\larger[4]$\mathbb{E}$}}}
               {\kern0pt\mathbb{E}}
               {\kern0pt\mathbb{E}}
               {\kern0pt\mathbb{E}}
  }\displaylimits
}
\newcommand{\abs}[1]{\left\lvert #1\right\rvert}
\newcommand{\Abs}[1]{\lvert #1\rvert}
\newtheorem{theorem}{Theorem}
\newtheorem{lemma}{Lemma}
\newtheorem{corollary}{Corollary}
\newtheorem{conjecture}{Conjecture}
\newtheorem{question}{Question}
\theoremstyle{definition}
\newtheorem{definition}{Definition}
\newtheorem{remark}{Remark}
\begin{document}

\begin{frontmatter}[classification=text]
%% EDITOR: this will force the keywords to appear right after the Abstract.
%%   If the abstract is too long and would force the keywords off the
%%   front page, please comment out % [classification=text] above
%%   This way the keywords will be floated on the bottom of the first page
%%   even though the Abstract spills over to the next page.

%%% AUTHOR: Title goes here.  This line is optional.  You must use it
%%   if title has footnote attached or requires nontrivial typesetting,
%%   e.g., inclusion of linebreaks to force nice layout.

\title{A polynomial Freiman-Ruzsa inverse theorem for function fields}

%%% AUTHOR:
%%% List all authors. If you wish, place grant acknowledgements in \thanks.
%%% In brackets include a short tag for each author.
\author[tb]{Thomas F. Bloom\thanks{Supported by a Royal Society University Research Fellowship.}}

\begin{abstract}Using the recent proof of the polynomial Freiman-Ruzsa conjecture over $\bbf_p^n$ by Gowers, Green, Manners, and Tao, we prove a version of the polynomial Freiman-Ruzsa conjecture over function fields. In particular, we prove that if $A\subset\mathbb{F}_p[t]$ satisfies $\abs{A+tA}\leq K\abs{A}$ then $A$ is efficiently covered by at most $K^{O(1)}$ translates of a generalised arithmetic progression of rank $O(\log K)$ and size at most $K^{O(1)}\abs{A}$. 

As an application we give an optimal lower bound for the size of $A+\xi A$ where $A\subset\fk$ is a finite set and $\xi\in \fk$ is transcendental over $\bbf_p[t]$.
\end{abstract}
\end{frontmatter}
%\address{Department of Mathematics, University of Manchester, Manchester, M13 9PL}
%\email{thomas.bloom@manchester.ac.uk}
\maketitle

The goal of inverse sumset theorems in additive combinatorics is to deduce strong structural properties of finite sets $A$ from weak combinatorial assumptions such as small doubling\footnote{We use the Vinogradov notation $X\ll Y$ to mean there is some unspecified constant $C>0$ such that $X\leq CY$.} $\abs{A+A}\ll \abs{A}$. Usually such questions are considered in either finite abelian groups (such as $\mathbb{Z}/N\bbz$ or $\bbf_p^n$) or $\bbz$. The latter is particularly important for applications to number theory. 

In this note we will instead work in the `function field' setting of $\bbf_q[t]$, where $\bbf_q$ is a fixed\footnote{In particular all implicit constants in this paper may depend on both the size of the field and its characteristic.} finite field of size $q=p^r$ for some prime $p$, and $t$ is an indeterminate. There is a rich tradition of studying the analogue of classic number theoretic questions in $\bbf_q[t]$, and often the rigid algebraic structure of the latter allows for stronger results.

We will show that this is also the case for inverse sumset theorems, and in particular the recent breakthrough in our quantitative understanding of inverse sumset theorems in $\bbf_p^n$ by Gowers, Green, Manners, and Tao \cite{GGMT} leads to similarly strong quantitative inverse results in $\bbf_q[t]$. Analogously strong quantitative results for $\bbz$ remain, as yet, out of reach.

We must be careful how we ask the question: if we only consider a finite set $A\subset \bbf_q[t]$ and its sumset $A+A$ then we only see the additive aspect of the ring, and so this is trivially equivalent to studying the inverse sumset problem in $\bbf_p^n$ for some $n$, which is the result of \cite{GGMT}. This is not, however, the correct inverse hypothesis for $\bbf_q[t]$, since it does not `see' the interaction of $A$ with the indeterminate $t$. To obtain a more interesting and useful function field analogue we need to track the interaction of $A$ with both addition and multiplication by $t$. The most obvious way of doing this is to bound the size, not of $A+A$, but of
\[A+tA = \{ a+bt : a,b\in A\}.\]

This leads to the following question.
\begin{question}
If $A\subset \bbf_q[t]$ is a finite set with $\abs{A+tA}\ll \abs{A}$ then what can be deduced about the structure of $A$?
\end{question}
We remark that $\Abs{A+tA}\ll \abs{A}$ immediately implies $\abs{A+A}\ll \abs{A}$ via the Ruzsa triangle inequality, but it is a strictly stronger assumption. For example, if we take $A$ to be the $\bbf_q$-linear span of $1,t^2,t^4,\ldots,t^{2n}$ then $\abs{A+A}=\abs{A}$ yet $\abs{A+tA}=\abs{A}^2$.

An obvious candidate for a set $A$ such that $\abs{A+tA}\ll \abs{A}$ is the initial segment $A=\{ x: \deg x <n\}$, where we have $\abs{A+tA}=q\abs{A}$. This is analogous to the observation that, in $\bbz$, the interval $\{1,\ldots,N\}$ is a natural example of a set with small doubling. Such examples can be generalised in the usual fashion, taking translates, dilates, sumsets, and passing to large subsets. 

The philosophy of inverse sumset results is that these natural examples are the only examples. In $\bbz$ this leads to the conclusion that $A$ is covered by a few translates of a low-dimensional arithmetic progression. The conclusion in $\bbf_q[t]$ is very similar, once we have defined what an arithmetic progression is in the natural way.

\begin{definition}[Generalised arithmetic progressions]
A generalised $\bbf_q[t]$-progression of rank $\leq d$ is a set of the shape
\[x_0+\Pol{n_1}\cdot x_1+\cdots+\Pol{n_d}\cdot x_d\]
for some $n_1,\ldots,n_d\geq 1$ and $x_0,x_1,\ldots,x_d\in \bbf_q[t]$, where 
\[\Pol{n} = \{ x \in \mathbb{F}_q[t] : \deg x < n\}.\]
\end{definition}

We can now state precisely the main result of this note, which is a quantitatively optimal inverse result in $\bbf_q[t]$, and should be viewed as the analogue of the polynomial Freiman-Ruzsa conjecture over $\bbf_p^n$, which was recently proved by Gowers, Green, Manners, and Tao \cite{GGMT}. 

We first state the result in the case when $q=p$, which is a little simpler. (Note we write $\langle A\rangle$ for the finite $\bbf_q$-vector space spanned by $A$.)
\begin{theorem}\label{th-pfrff}
Let $K_1,K_2\geq 2$. If $A\subseteq \bbf_p[t]$ is a finite set and
\[\abs{A+A}\leq K_1\abs{A}\textrm{ and }\abs{A+tA}\leq K_2\abs{A}\]
then there is a generalised $\bbf_p[t]$-progression $P\subseteq \langle A\rangle$ of rank $O(\log K_2)$ and size 
\[K_2^{-O(1)}\abs{A}\leq \abs{P}\leq K_1^{O(1)}\abs{A}\]
such that 
\begin{enumerate}
\item $A\subseteq P+X$ for some  finite set $X\subseteq \bbf_p[t]$ of size $\abs{X}\leq K_2^{O(1)}$ and
\item $P\subseteq A-A+X'$ for some finite set $X'\subseteq P$ of size $\abs{X'}\leq K_1^{O(1)}$.
\end{enumerate}
\end{theorem}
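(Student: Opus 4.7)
The plan is to combine the polynomial Freiman-Ruzsa theorem of Gowers, Green, Manners, and Tao in $\bbf_p^n$ (hereafter PFR) with a structural analysis of $\bbf_p$-subspaces of $\bbf_p[t]$ under multiplication by $t$. One useful preliminary observation is that applying the Pl\"unnecke-Ruzsa inequality to the pair $(A,tA)$, the hypothesis $\abs{A+tA}\leq K_2\abs{A}$ immediately gives $\abs{nA-mA}\leq K_2^{n+m}\abs{A}$ for all $m,n\geq 0$, and in particular $\abs{A+A}\leq K_2^2\abs{A}$, so $K_1\leq K_2^2$. Hence any bound involving $K_1^{O(1)}$ is automatically also a bound involving $K_2^{O(1)}$.

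I would first apply PFR to $A$, viewed as a finite subset of the $\bbf_p$-vector space $\bbf_p[t]$. This produces an $\bbf_p$-subspace $H\subseteq\langle A\rangle$ with $\abs{H}$ in $[K_1^{-O(1)}\abs{A},K_1^{O(1)}\abs{A}]$, together with finite sets $X$ and $X'\subseteq H$ of size at most $K_1^{O(1)}$ witnessing $A\subseteq H+X$ and $H\subseteq A-A+X'$. I then control the $t$-expansion of $H$ via the inclusion
\[H+tH\;\subseteq\;(A+tA)-(A+tA)+(X'+tX').\]
Pl\"unnecke-Ruzsa applied to $(A,tA)$ gives $\abs{(A+tA)-(A+tA)}\leq K_2^{O(1)}\abs{A}$, and combined with the trivial bound $\abs{X'+tX'}\leq K_1^{O(1)}\leq K_2^{O(1)}$ and the lower estimate $\abs{H}\geq K_2^{-O(1)}\abs{A}$, this yields $\abs{H+tH}/\abs{H}\leq K_2^{O(1)}$; equivalently, $\dim(H+tH)-\dim H\leq C\log K_2$ for some absolute constant $C$.

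The final ingredient is a linear-algebraic structural lemma: every $\bbf_p$-subspace $V\subseteq\bbf_p[t]$ with $\dim(V+tV)-\dim V\leq d$ is a generalised $\bbf_p[t]$-progression of rank at most $d$. I would prove this by a good-lift argument. The quotient $V/(V\cap tV)$ has dimension exactly $d$; for a basis $\bar x_1,\ldots,\bar x_d$ I would lift each $\bar x_i$ to an element $x_i\in V$ maximising the orbit length $n_i=\max\{n\geq 1:x_i,tx_i,\ldots,t^{n-1}x_i\in V\}$ among all lifts in its coset. The family $\{t^jx_i:0\leq j<n_i,\,1\leq i\leq d\}$ is then linearly independent (by projecting successively modulo $V\cap tV$ and dividing by $t$) and spans $V$ (since any $v$ outside the span would, after iterated reduction modulo the $x_i$ followed by division by $t$, produce an infinite strictly-decreasing degree chain, contradicting finite-dimensionality). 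This gives the decomposition $V=\bigoplus_{i=1}^d[n_i]\cdot x_i$. Applying the lemma to the subspace $H$ of the previous step produces the sought generalised $\bbf_p[t]$-progression $P$ of rank $O(\log K_2)$, and the covering conditions for $P$ are inherited directly from those of $H$.

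The main technical obstacle is the structural lemma, and specifically the verification that the good lifts $x_i$ produce orbits that direct-sum to $V$; this requires the dimension-chasing argument sketched above to show that the reduction $v\mapsto w$ (where $v-\sum c_ix_i=tw$) terminates. The rest of the proof is a routine combination of PFR with Pl\"unnecke-Ruzsa sumset inequalities, the key bookkeeping point being the observation $K_1\leq K_2^2$ which allows the various parameters to be stated uniformly in terms of $K_2$ as required by the theorem.
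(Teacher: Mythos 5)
Your overall architecture is a legitimate and in some ways more direct route than the paper's for the $q=p$ case. The paper deduces Theorem~\ref{th-pfrff} from Theorem~\ref{th-gen}, whose proof runs through the \emph{entropic} form of PFR and an intersection trick $H'=\bigcap_\lambda(\lambda H)\cap\bigcap_\lambda(\lambda tH)$ to obtain a subspace whose $t$-expansion is controlled. You instead apply the covering form of PFR directly to get a subspace $H$ and bound $\abs{H+tH}$ via the inclusion $H+tH\subseteq(A-A)+t(A-A)+(X'+tX')$ together with Pl\"unnecke--Ruzsa, then take $P=H$. This part is sound, and the observation $K_1\leq K_2^2$ handles the bookkeeping correctly. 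Both approaches then reduce to the same structural lemma, which is the paper's Theorem~\ref{th:vecspace}.

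The genuine gap is in your proposed proof of that lemma. You fix a basis $\bar x_1,\ldots,\bar x_d$ of $V/(V\cap tV)$ and then lift each coset to an orbit-maximising representative, but the choice of basis of the quotient matters and an arbitrary one fails. A concrete counterexample: take $V=\bbf_p\{1,t^2,t^3\}$, so $V\cap tV=\bbf_p t^3$ and the quotient is $2$-dimensional. With basis $\bar 1,\overline{1+t^2}$ every lift of $\bar 1$ has orbit length $1$ (since no lift $1+\alpha t^3$ has $t+\alpha t^4\in V$) and likewise every lift of $\overline{1+t^2}$ has orbit length $1$; the resulting $W$ is only $2$-dimensional while $\dim V=3$. (The basis $\bar 1,\bar t^2$ does work, with orbit lengths $1$ and $2$.) So the lemma requires a careful \emph{selection} of the basis, not just a good lift within each fixed coset. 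Relatedly, your spanning argument via ``iterated reduction modulo the $x_i$ followed by division by $t$'' does not obviously terminate: writing $v-\sum c_ix_i=tw$ with $w\in V$ need not decrease degree (e.g.\ if $x_1=1+t^3$ and $v=1$, then $w=-t^2$ has larger degree than $v$). The paper avoids both pitfalls by organising the induction around degrees, using Lemma~\ref{l-grobner} to choose a basis of distinct degrees and then building the progression decomposition one maximal-degree basis element at a time; that degree filtration is exactly what pins down which coset representatives to take. To repair your sketch you would need, at minimum, to specify an ordering or greedy choice of the basis vectors (e.g.\ of distinct degrees as in Lemma~\ref{l-grobner}) and argue termination of the reduction against that ordering, which in effect reconstructs the paper's inductive scheme.
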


In the above we have decoupled the two doubling constants, since this is useful in some applications. Since $\abs{A+A}\leq \abs{A+tA}^2/\abs{A}$ by the Pl\"{u}nnecke-Ruzsa inequality (see for example \cite[Corollary 6.28]{TV}), we can immediately deduce the following simpler form.
\begin{corollary}\label{cor-pfrff}
Let $K\geq 2$. If $A\subseteq \bbf_p[t]$ is a finite set and $\abs{A+tA}\leq K\abs{A}$ then there is a generalised $\bbf_p[t]$-progression $P$ of rank $O(\log K)$ and size $\abs{P}\leq K^{O(1)}\abs{A}$, together with a set $X\subseteq \bbf_p[t]$ of size $\abs{X}\leq K^{O(1)}$, such that $A\subseteq P+X$.
\end{corollary}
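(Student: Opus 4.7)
The plan is to deduce this corollary directly from Theorem \ref{th-pfrff} by using the Pl\"{u}nnecke-Ruzsa inequality to bound $\abs{A+A}$ in terms of $\abs{A+tA}$, and then applying the theorem with the two doubling constants chosen appropriately.

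First I would note that, because multiplication by $t$ is an injection on $\bbf_p[t]$, the set $B=tA$ satisfies $\abs{B}=\abs{A}$ and $\abs{B+B}=\abs{A+A}$. The hypothesis then reads $\abs{A+B}\leq K\abs{A}$, so the form of Pl\"{u}nnecke-Ruzsa referenced just before the statement of the corollary gives $\abs{A+A}=\abs{B+B}\leq K^2\abs{A}$. Applying Theorem \ref{th-pfrff} with $K_1=K^2$ and $K_2=K$ then produces a generalised $\bbf_p[t]$-progression $P$ of rank $O(\log K_2)=O(\log K)$ and size $\abs{P}\leq K_1^{O(1)}\abs{A}=K^{O(1)}\abs{A}$, together with a covering set $X$ of size $\abs{X}\leq K_2^{O(1)}=K^{O(1)}$ with $A\subseteq P+X$, which is exactly the conclusion required.

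Since the corollary is a formal consequence of the main theorem, there is no substantive obstacle in this deduction: the only point requiring care is the invocation of Pl\"{u}nnecke-Ruzsa, which is made entirely routine by the observation $\abs{tA}=\abs{A}$. The real work of the paper lies in Theorem \ref{th-pfrff} itself, where the recent $\bbf_p^n$ result of Gowers, Green, Manners, and Tao must be brought to bear.
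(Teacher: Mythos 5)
Your proof is correct and takes essentially the same route as the paper: both deduce $\abs{A+A}\leq K^2\abs{A}$ via Pl\"{u}nnecke--Ruzsa (your step with $B=tA$ and $\abs{B}=\abs{A}$ is just a slight unpacking of the same cited inequality) and then apply Theorem~\ref{th-pfrff} with $K_1=K^2$, $K_2=K$.
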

The strength of this result is perhaps slightly surprising given that the analogous naive formulation of the polynomial Freiman-Ruzsa conjecture is actually false over the integers, as shown by Lovett and Regev \cite{LR17} -- for quantitative bounds of polynomial strength one must work not just with generalised arithmetic progressions but with the more general concept of `convex progressions' (linear images of the intersection of a lattice with a symmetric convex body).

The case when $q=p$ is simpler because in controlling $A+tA$ we control not only $A+A$ but also $A+\lambda A$ for all $\lambda \in \bbf_p$. Indeed, if $\abs{A+tA}\leq K\abs{A}$ then, by the Pl\"{u}nnecke-Ruzsa inequality, $\abs{A+\cdots +A}\leq K^{n}\abs{A}$ for all integers $n\geq 1$ (where there are $n$ copies of the summand $A$), and hence $\abs{A+\lambda A}\leq K^{O_p(1)}\abs{A}$ for all $\lambda\in \bbf_p$.

For general fields there is no way to control $\abs{A+\lambda A}$ for all $\lambda \in \bbf_q$ by $\abs{A+tA}$ alone, and so for the general statement we need to add an additional hypothesis.

\begin{theorem}\label{th-gen}
Let $K_1,K_2\geq 2$. If $A\subseteq \bbf_q[t]$ is a finite set and
\[\abs{A+\lambda A}\leq K_1\abs{A}\textrm{ and }\abs{A+tA}\leq K_2\abs{A}\]
for all $\lambda \in \bbf_q$ then there is a generalised $\bbf_q[t]$-progression $P\subseteq \langle A\rangle$ of rank $O(\log K_1K_2)$ and size 
\[(K_1K_2)^{-O(1)}\abs{A}\leq \abs{P}\leq K_1^{O(1)}\abs{A}\]
such that 
\begin{enumerate}
\item $A\subseteq P+X$ for some  finite set $X\subseteq \bbf_q[t]$ of size $\abs{X}\leq (K_1K_2)^{O(1)}$ and
\item $P\subseteq A-A+X'$ for some finite set $X'\subseteq P$ of size $\abs{X'}\leq K_1^{O(1)}$.
\end{enumerate}
\end{theorem}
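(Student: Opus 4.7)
The plan is to deduce Theorem \ref{th-gen} from the polynomial Freiman--Ruzsa theorem of \cite{GGMT} by first symmetrising $A$ under the $\bbf_q$-action, applying GGMT in the $\bbf_p$-vector space $\bbf_q[t]$, upgrading the resulting $\bbf_p$-subspace to an $\bbf_q$-subspace, and finally converting this subspace into a generalised $\bbf_q[t]$-progression of low rank via its small $t$-doubling.

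Set $A^\ast=\bbf_q\cdot A$, so $|A|\leq|A^\ast|\leq q|A|$. The hypothesis $|A+\lambda A|\leq K_1|A|$ combined with the Ruzsa triangle inequality yields $|A^\ast+A^\ast|\leq K_1^{O(1)}|A^\ast|$. Similarly, the inequality $|A+\mu tA|\cdot|\mu A|\leq|A-\mu A|\cdot|\mu A+\mu tA|$ combined with the hypotheses gives $|\lambda A+\mu tA|\leq(K_1K_2)^{O(1)}|A|$ for all $\lambda,\mu\in\bbf_q$, hence $|A^\ast+tA^\ast|\leq(K_1K_2)^{O(1)}|A^\ast|$. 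Viewing $\bbf_q[t]$ as an $\bbf_p$-vector space and applying \cite{GGMT} to $A^\ast$ produces an $\bbf_p$-subspace $H\subseteq\langle A\rangle$ with $A^\ast\subseteq H+X$, $H\subseteq A^\ast-A^\ast+Y$, $|H|\leq K_1^{O(1)}|A|$, and $|X|,|Y|\leq K_1^{O(1)}$.

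Set $V:=\sum_{\lambda\in\bbf_q}\lambda H$; this is an $\bbf_q$-subspace since multiplication by a generator of $\bbf_q^\ast$ merely permutes the summands. Because $A^\ast$ is $\bbf_q^\ast$-invariant, each $\lambda H\subseteq A^\ast-A^\ast+\lambda Y$, so $V\subseteq(q-1)(A^\ast-A^\ast)+\sum_{\lambda}\lambda Y$, whence $|V|\leq K_1^{O(1)}|A|$ by Plünnecke--Ruzsa applied to $A^\ast$. A further Plünnecke--Ruzsa computation on $A^\ast\cup tA^\ast$ gives $|V+tV|\leq(K_1K_2)^{O(1)}|A|$. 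Since $V$ and $V+tV$ are both $\bbf_q$-subspaces this is equivalent to
\[\dim_{\bbf_q}(V+tV)-\dim_{\bbf_q}V\leq k,\quad k=O(\log K_1K_2).\]

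The heart of the argument is to construct a generalised $\bbf_q[t]$-progression $P\subseteq V$ of rank $O(k)$ and $\bbf_q$-codimension $O(k)$ in $V$. The approach is to analyse the descending filtration
\[V\supseteq V\cap tV\supseteq V\cap tV\cap t^2V\supseteq\cdots,\]
whose successive quotients all have $\bbf_q$-dimension at most $k$ (by applying the doubling bound to the iso $x\mapsto t^mx$ from $\{x\in V:tx,\ldots,t^mx\in V\}$ to the $m$-fold intersection). The key observation is that any element $w$ in the $m$-fold intersection takes the form $w=t^mx$ with $x\in V$ and the full orbit $x,tx,\ldots,t^mx$ lying in $V$, giving a rank-one subprogression $[m+1]x\subseteq V$. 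A greedy extraction procedure --- at each stage selecting a generator of maximal orbit length transversal to the previously chosen orbits --- should yield the required progression $P$ after at most $O(k)$ rounds, and the covering conclusions of Theorem \ref{th-gen} then follow from the corresponding properties of $V$ via the Ruzsa covering lemma. The main technical obstacle is the greedy extraction: verifying that $O(k)$ generators suffice to cover all but an $O(k)$-codimensional piece of $V$ requires a careful pigeonhole argument using the fact that each level of the filtration has $\bbf_q$-dimension at most $k$, so that longer orbits simultaneously contribute to multiple levels.
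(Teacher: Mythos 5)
Your opening moves take a genuinely different route from the paper. Where the paper works with entropic Ruzsa distance throughout, applies the entropic form of \cite{GGMT}, and then passes to a \emph{smaller} $\bbf_q$-subspace by intersecting $\bigcap_{\lambda}(\lambda H)\cap\bigcap_{\lambda}(\lambda tH)$, you instead symmetrise by taking $A^\ast=\bbf_q\cdot A$, apply the set-theoretic PFR theorem to get an $\bbf_p$-subgroup $H$, and pass to the \emph{larger} $\bbf_q$-subspace $V=\sum_{\lambda}\lambda H$. Both are reasonable; the bookkeeping you sketch (Ruzsa triangle inequality to control $|A^\ast+tA^\ast|$, Pl\"unnecke--Ruzsa to control $|V|$ and $|V+tV|$, Ruzsa covering at the end) is all plausible, and the output is the same: a finite $\bbf_q$-subspace $V\subseteq\langle A\rangle$ with $|V+tV|\leq q^k|V|$, $k=O(\log K_1K_2)$, covering $A$ efficiently and itself covered by $O(K_1^{O(1)})$ translates of $A-A$.

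The genuine gap is the step you yourself flag as ``the heart of the argument'' and ``the main technical obstacle'': converting the bounded $t$-doubling of the $\bbf_q$-subspace $V$ into a low-rank generalised $\bbf_q[t]$-progression. This is precisely the content of the paper's Theorem~\ref{th:vecspace}, which is the only genuinely new result in the paper, and whose proof occupies all of Section~\ref{sec-vecproof} via a nontrivial induction on a carefully chosen ``strong structural arithmetic dimension'' together with a Gr\"obner-type graded basis (Lemma~\ref{l-grobner}). Your replacement --- a filtration $V\supseteq V\cap tV\supseteq\cdots$ with quotients of dimension $\leq k$, followed by a greedy extraction of $t$-orbits --- is a plausible starting point (the quotient bound you assert is correct: the map $x\mapsto t^m x+V$ embeds $W_{m-1}/W_m$ into $(V+tV)/V$), but the greedy extraction is left entirely unjustified. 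The difficulty is exactly the one the paper's proof is designed to handle: when you extract an orbit $[n_i]y_i$, the remaining complement in $V$ is not itself $t$-stable in any useful sense, so one cannot simply induct on the complement; the paper gets around this by allowing the decomposition to be rebuilt at each step (the passage from the decomposition of $V_{\leq s}$ to one of $V_{\leq s+1}$ in equation~\eqref{vecspa}). Without either that inductive mechanism or an explicit proof that your pigeonhole succeeds, this step is not a proof. Note also that $V$ already satisfies the exact hypothesis of Theorem~\ref{th:vecspace}, so if you are willing to cite it the gap closes immediately; as written, however, you are implicitly re-proving (a weaker version of) that theorem and have not done so.
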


Theorem~\ref{th-pfrff} is an immediate consequence of Theorem~\ref{th-gen} and so we will prove only the latter, which is a consequence of the main result of \cite{GGMT}, together with the following structural result (the only novelty in this paper).

\begin{theorem}\label{th:vecspace}
If $V\subset \bbf_q[t]$ is a finite $\bbf_q$-vector space and 
\[\abs{V+tV}\leq q^k\abs{V}\]
then $V$ is a generalised $\bbf_q[t]$-progression of rank at most $k$.
\end{theorem}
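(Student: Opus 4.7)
The strategy is to read off a progression decomposition of $V$ directly from a natural $t$-filtration of $V$. For each $j \geq 0$ define
\[V^{(j)} = \{v \in V : v, tv, \ldots, t^j v \in V\},\]
giving a decreasing chain of $\bbf_q$-subspaces $V = V^{(0)} \supseteq V^{(1)} \supseteq V^{(2)} \supseteq \cdots$ which eventually reaches $0$, since iterating multiplication by $t$ pushes any nonzero polynomial past the maximal degree occurring in $V$. Multiplication by $t$ sends $V^{(j)}$ into $V^{(j-1)}$, and the induced quotient map
\[\bar{t} \colon V^{(j)}/V^{(j+1)} \to V^{(j-1)}/V^{(j)}\]
is injective: if $v \in V^{(j)}$ and $tv \in V^{(j)}$ then $v, tv, \ldots, t^{j+1}v$ all lie in $V$, forcing $v \in V^{(j+1)}$.

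Setting $d_j = \dim_{\bbf_q} V^{(j)}/V^{(j+1)}$, this injectivity gives $d_0 \geq d_1 \geq d_2 \geq \cdots$. In addition, the surjection $V \to (V+tV)/V$, $v \mapsto tv + V$, has kernel exactly $V^{(1)}$, so
\[d_0 = \dim V - \dim V^{(1)} = \dim(V+tV) - \dim V \leq k\]
by hypothesis; in particular every $d_j$ is at most $k$.

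I then plan to build a basis of $V$ by working downwards through the filtration. At the topmost level $J$ (where $V^{(J)} \neq 0 = V^{(J+1)}$) pick any $\bbf_q$-basis of $V^{(J)}$. At each lower level $j < J$ the injectivity of $\bar t$ tells us that the $t$-shifts of the chosen level-$(j+1)$ basis project to $d_{j+1}$ linearly independent classes in $V^{(j)}/V^{(j+1)}$, which I extend by $d_j - d_{j+1}$ \emph{fresh} elements to a full basis. The total number of fresh elements introduced across all levels telescopes to $\sum_{j \geq 0}(d_j - d_{j+1}) = d_0 \leq k$. Each fresh element $x$ introduced at level $j(x)$ lies in $V^{(j(x))}$, so the chain $x, tx, \ldots, t^{j(x)} x$ is contained in $V$; and by the construction these chains jointly form a basis of $V$. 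Setting $n_x = j(x)+1$ this yields
\[V = \sum_{x} \Pol{n_x} \cdot x,\]
exhibiting $V$ as a generalised $\bbf_q[t]$-progression of rank $d_0 \leq k$. I do not anticipate a substantive obstacle: beyond the injectivity of $\bar t$, the argument is a bookkeeping exercise on the filtration.
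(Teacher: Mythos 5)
Your proof is correct, and it takes a genuinely different route from the paper's. The paper proceeds by a somewhat delicate induction: it introduces a ``strong structural dimension,'' uses a degree-ordered triangular basis of $V$ (Lemma~\ref{l-grobner}), and argues by contradiction with the maximal $s$ for which the truncated space $V_{\leq s}$ has small strong dimension, carefully massaging a degree-ordered decomposition at each step. Your argument instead builds the filtration $V = V^{(0)} \supseteq V^{(1)} \supseteq \cdots$ by persistence under multiplication by $t$, observes that $\bar t$ is injective on successive quotients, computes $d_0 = \dim(V+tV) - \dim V$, and reads off a progression decomposition by lifting bases of the quotients. This is essentially a hands-on instantiation of the Kronecker-quiver sketch the paper gives in Remark~\ref{rem} but declines to carry out: your chains $x, tx, \ldots, t^{j(x)}x$ are exactly the one-dimensional string summands $[n]\cdot x$ that the quiver classification would produce. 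Your route is cleaner and more conceptual; it also delivers directly the stronger facts that the decomposition is a direct sum and that the rank equals $\log_q(\abs{V+tV}/\abs{V})$ exactly, i.e.\ $\dim_{\mathrm{weak}}(V) = \dim_{\mathrm{struc}}(V)$, which the paper obtains only as a byproduct of its chain of inequalities. The one place worth spelling out a bit more in a written-up version is the verification that the full set $\{t^i x : 0 \leq i \leq j(x)\}$ of chain elements is a basis of $V$: this is the standard fact that lifting bases of the successive quotients of a finite filtration yields a basis of the whole space, and your inductive construction does ensure that, at each level $j$, the elements $t^{j(x)-j}x$ with $j(x) \geq j$ represent a basis of $V^{(j)}/V^{(j+1)}$, but the statement deserves to be made explicit.
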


\begin{remark}\label{rem}
Our proof of Theorem~\ref{th:vecspace} is self-contained and elementary, but Will Sawin has observed that it should also follow the classification of representations of Kronecker quivers (which in this context are simply pairs of vector spaces $V$ and $W$, with an associated pair of linear maps from $V$ to $W$).

Our representation is formed by taking $W=V+tV$ and the maps $v\mapsto v$ and $v\mapsto tv$. This representation can be written as the direct sum of irreducible components, the possibilities of which are given by the aforementioned classification. The fact that any non-trivial linear combination of the linear maps in question is injective means that most of the possibilities in this classification should be able to be ruled out, leaving the only possibilities corresponding to some $1$-dimensional progression $[n]x\leq V$. It follows that $V$ is a direct sum of spaces of the shape $[n]x$, and as such is a generalised progression, and comparing the dimensions of $V$ and $W$ implies the rank of this progression is at most $k$.

We have not found a clean statement of the relevant classification (which is usually only considered over algebraically closed fields, rather than finite fields) suitable to make this sketch precise. Since our focus is on additive combinatorics, and Theorem~\ref{th:vecspace} has an elementary proof, we end this digression here.
\end{remark}

We give our proof of Theorem~\ref{th:vecspace} in Section~\ref{sec-vecproof}, and now deduce Theorem~\ref{th-gen}. 
 
\begin{proof}[Proof of Theorem~\ref{th-gen}]
We will use the notation and definitions from \cite{GGMT}. In particular, we write $d[X;Y]$ for the entropic Ruzsa distance between random variables $X$ and $Y$ defined by
\[d[X;Y] = H(X'+Y')-(H(X')+H(Y'))/2\]
where $X',Y'$ are independent copies of $X$ and $Y$ respectively and $H$ is the Shannon entropy. We will use standard properties of entropic distance as recorded in \cite[Proposition A.1]{GGMT} without further mention. In particular, if $U_A$ is the uniform distribution on $A$ then $d[U_A;U_A]\ll \log K_1$. By \cite[Theorem 1.3]{GGMT} there exists some finite subgroup $H\subseteq \langle A\rangle$ such that $d[U_A;U_H]\ll \log K_1$. As explained in \cite[Appendix B]{GGMT} the fact that $d[U_A;U_H]\ll \log K_1$ implies that $\abs{H}\ll K_1^{O(1)}\abs{A}$ and $A$ is covered by $K_1^{O(1)}$ many translates of $H$.

Furthermore, since $d[U_A;U_{t A}]\leq \log K_2$ we further deduce via the Ruzsa triangle inequality (in entropic form) that $d[U_H;U_{t H}]\ll \log(K_1K_2)$. Similarly, since $d[U_A;U_{\lambda A}]\ll \log K_1$ for all $\lambda \in \bbf_q$ we have $d[U_H; U_{\lambda H}]\ll \log K_1$ also, and hence $d[U_H;U_{\lambda tH}]$ by the triangle inequality.

It follows (again arguing as in \cite[Appendix B]{GGMT}) that, for any $\lambda \in \bbq^*$, there is some $x_\lambda$ such that

\[\abs{(\lambda H)\cap (H+x_\lambda)}\gg K_1^{-O(1)}\abs{H}\]
and $x_\lambda'$ such that
\[\abs{(t \lambda H)\cap (H+x_\lambda')}\gg (K_1K_2)^{-O(1)}\abs{H}.\]
Since both $\lambda H$ and $t\lambda H$, for all $\lambda \in\bbf_q^*$, are all finite subgroups of the same size, this implies that
\[\abs{H'}\gg (K_1K_2)^{-O(1)}\abs{H},\]
where 
\[H'= \bigcap_{\lambda \in \bbf_q}(\lambda H)\cap \bigcap_{\lambda \in \bbf_q}(\lambda tH).\]
Note that $H'$ is a finite $\bbf_q$-vector space. Since $H'\leq H$ we can cover $H$ by $ (K_1K_2)^{O(1)}$ many translates of $H'$, hence $A$ is also covered by $(K_1K_2)^{O(1)}$ many translates of $H'$. Finally, we note that $H'+t H'\subseteq t H$, and hence 
\[\abs{H'+t H'}\leq \abs{H}\leq (K_1K_2)^{O(1)}\abs{H'}.\]
Theorem~\ref{th:vecspace} implies that $H'$ is a progression of rank $O(\log K_1K_2)$. 

For the final part we observe that by Ruzsa's covering lemma \cite[Lemma 2.14]{TV}) there exists some $X'$ of size
\[\abs{X'}\leq \frac{\abs{H+A}}{\abs{A}}\leq K_1^{O(1)}\]
such that $H'\subseteq H\subseteq A-A+X'$.
\end{proof}

Finally, we remark that Theorem~\ref{th:vecspace} was proved some years ago as part of the author's PhD thesis but never published. The spectacular advances of Gowers, Green, Manners, and Tao \cite{GGMT} mean that the consequences are now quite strong. Since those interested in applications to number theory in function fields would find these consequences interesting and perhaps useful, we thought it was time that they were published.

In Section~\ref{sec-vecproof} we prove Theorem~\ref{th:vecspace}. In Section~\ref{sec-dil} we give an application of this inverse result to the study of sums of transcendental dilates.
\section{Proof of Theorem~\ref{th:vecspace}}\label{sec-vecproof}

If $V$ is a finite $\bbf_q$-vector space then so is $V+t V$, and hence $\abs{V+t V}/\abs{V}=q^r$ for some $r\geq 0$. With this in mind, we call $r=\log_q(\abs{V+t V}/\abs{V})$ the \emph{weak arithmetic dimension} of $V$. 

An easy example of a $V$ with small weak arithmetic dimension is any space of the form
\begin{equation}\label{eq-pol}
V=\Pol{d_1}\cdot x_1+ \cdots +\Pol{d_k}\cdot x_k
\end{equation}
for some $d_1,\dots,d_k\geq 1$ and $x_1,\ldots,x_k\in \bbf_q[t]$. Such a $V$ has weak arithmetic dimension at most $k$. With this in in mind, we define the \emph{structural arithmetic dimension} of $V$ to be the smallest $k$ such that \eqref{eq-pol} holds. Observe in particular that, if we consider $d_1=\cdots=d_k=1$, the structural arithmetic dimension of $V$ is at most the dimension of $V$ considered as a vector space over $\bbf_q$. Summarising this discussion, we have the chain of inequalities 
\[\dim_{\mathrm{weak}}(V)\leq \dim_{\mathrm{struc}}(V)\leq \dim(V).\]
It is less obvious that in fact we always have equality $\dim_{\mathrm{weak}}(V)= \dim_{\mathrm{struc}}(V)$. This is the content of Theorem~\ref{th:vecspace}. 

For the proof of this we will use the following technical lemma. This is essentially the one variable case of the fact that a minimal Gr\"{o}bner basis always exists for any finite set in a polynomial ring; in this special case the proof is particularly simple.
\begin{lemma}\label{l-grobner}
If $V\subset \bbf_q[t]$ is a finite $\bbf_q$-vector space then there exists a decomposition of the form
\[V=\bbf_q\cdot x_1\oplus\cdots\oplus \bbf_q\cdot x_d,\]
where $\deg x_1<\deg x_2<\cdots<\deg x_d$.
\end{lemma}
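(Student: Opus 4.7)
The plan is to proceed by induction on $d=\dim_{\bbf_q} V$. The base case $d=0$ is vacuous, and the case $d=1$ is trivial since any nonzero vector serves as $x_1$. For the inductive step, I would peel off a single basis vector of maximal degree and reduce to a strictly lower-dimensional subspace of strictly smaller degrees.

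Concretely, let $n_d = \max\{\deg v : v \in V\setminus\{0\}\}$, and choose any $x_d \in V$ of degree $n_d$, with leading coefficient $c_d \in \bbf_q^*$. Define the linear functional $\phi : V \to \bbf_q$ sending each $v$ to the coefficient of $t^{n_d}$ in $v$ (which is $0$ whenever $\deg v < n_d$). Since $\phi(x_d) = c_d \neq 0$, the map $\phi$ is surjective, and its kernel is exactly
\[V' = \{v \in V : \deg v < n_d\}.\]
Thus $\dim V' = d - 1$ and $V = V' \oplus \bbf_q\cdot x_d$. Applying the induction hypothesis to $V'$ gives a decomposition $V' = \bbf_q\cdot x_1 \oplus \cdots \oplus \bbf_q\cdot x_{d-1}$ with $\deg x_1 < \cdots < \deg x_{d-1}$, and by construction $\deg x_{d-1} < n_d = \deg x_d$, so appending $x_d$ yields the desired chain.

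I do not anticipate any real obstacle here: the only point that needs a line of justification is the splitting $V = V' \oplus \bbf_q\cdot x_d$, which rests on the elementary fact that leading coefficients live in the field $\bbf_q$, so any $v \in V$ of degree $n_d$ has its top-degree term killed by subtracting the unique scalar multiple $(\phi(v)/c_d)\,x_d$. This is morally just the existence of a row echelon form, and the inductive framing mirrors the standard proof that any finite set in $\bbf_q[t]$ admits a minimal Gr\"obner basis (in one variable, where term reduction terminates trivially). No genuine content from the theory of polynomial rings beyond the degree function is needed.
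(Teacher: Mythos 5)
Your proof is correct and takes essentially the same approach as the paper: induction on dimension, peeling off one vector of extreme degree. The only (immaterial) difference is that the paper removes a monic element of \emph{minimal} degree and then argues that any complement consists of strictly higher-degree elements, whereas you remove an element of \emph{maximal} degree and exhibit the complement explicitly as the kernel of the leading-coefficient functional.
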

\begin{proof}
We use induction on $\dim V$. If $\dim V=0$ then the result is trivial. Otherwise, let $x\in V\backslash\{0\}$ be a monic polynomial of minimal degree, and let $W$ be such that $V=\bbf_q\cdot x\oplus W$. The result follows from the inductive hypothesis and the fact that if $w\in W\backslash\{0\}$ then $\deg w>\deg x$. This latter fact is true because otherwise we must have a monic $w\in W\backslash \{0\}$ such that $\deg w=\deg x$, and hence $w-x\in V\backslash\{0\}$ has degree strictly less than $\deg x$, which contradicts the minimality of $\deg x$. 
\end{proof}

We will now prove Theorem~\ref{th:vecspace}. The proof is inductive, and the trick is to choose the right inductive hypothesis, which appears stronger than we strictly need.

\begin{proof}[Proof of Theorem~\ref{th:vecspace}]
For the purposes of this proof, we introduce yet another notion of dimension: $V$ has strong structural arithmetic dimension $k=\dim_{\mathrm{strong}}(V)$ if $k$ is minimal such that there exist $d_1,\ldots,d_k\geq 1$ and $x_1,\ldots,x_k\in\bbf_q[t]$ such that
\[V=\Pol{d_1}\cdot x_1\oplus \cdots\oplus \Pol{d_k}\cdot x_k\]
and $d_1+\deg x_1<\cdots<d_k+\deg x_k$. Note by Lemma~\ref{l-grobner} we may always take $d_1=\cdots=d_k=1$ and $k=\dim(V)$, and so $\dim_{\mathrm{strong}}(V)$ exists and is at most $\dim(V)$. In general, the following chain of inequalities is immediate from the definitions:
\[\dim_{\mathrm{weak}}(V)\leq \dim_{\mathrm{struc}}(V)\leq \dim_{\mathrm{strong}}(V)\leq \dim(V).\]
We will show by induction on $\dim_{\mathrm{strong}}$ that $\dim_{\mathrm{strong}}(V)\leq \dim_{\mathrm{weak}}(V)$, and hence certainly we have proved that always $\dim_{\mathrm{weak}}=\dim_{\mathrm{struc}}=\dim_{\mathrm{strong}}$. In other words, we will show for all $r\geq 0$ that if $\abs{V+t V}= q^r\abs{V}$ then $\dim_{\mathrm{strong}}(V)\leq r$. 

The case $r=0$ is trivial, since if $x\in V$ is an element of maximal degree then $V\cup \{tx\}\subseteq V+t V$, and if $tx\in V$ then $x=0$. In particular, if $\abs{V+t V}\leq \abs{V}$ then $V=\{0\}$ as required. We shall hence assume that $r\geq 1$ and that the claim has been proved for $r'<r$.

Suppose that $\abs{V+t V}=q^r\abs{V}$. Let 
\[V=\bbf_q\cdot x_1\oplus\cdots\oplus \bbf_q\cdot x_\ell\]
with $\deg x_1<\cdots <\deg x_\ell$, as provided by Lemma~\ref{l-grobner}, and for $1\leq s\leq \ell$ let
\[V_{\leq s}=\bbf_q\cdot x_1\oplus\cdots\oplus \bbf_q\cdot x_s.\]
We observe that if $x\in V_{\leq s}\backslash \{0\}$ then $\deg x_1\leq \deg x\leq \deg x_s$. Furthermore, if $x\in V\backslash V_{\leq s}$ then $\deg x>\deg y$ for all $y\in V_{\leq s}$. Let $1\leq s\leq \ell$ be maximal such that $V_{\leq s}$ has strong arithmetic dimension at most $r$. If $s=\ell$ then we are done, having shown that the strong arithmetic dimension of $V$ is at most $r$ as required. Suppose then, otherwise, that $1\leq s<\ell$ and that $V_{\leq s}$ has strong arithmetic dimension $1\leq r'\leq r$. We must have $r'=r$, or this contradicts the maximality of $s$ by considering the decomposition
\[V_{\leq s+1}=V_{\leq s}\oplus \bbf_q\cdot x_{s+1}=\Pol{d_1}\cdot y_1\oplus\cdots \oplus \Pol{d_{r'}}\cdot y_{r'}\oplus \bbf_q\cdot x_{s+1}.\]
Hence $V_{\leq s}$ has strong arithmetic dimension $r$, whence we have some decomposition 
\[V_{\leq s}=\Pol{d_1}\cdot y_1\oplus\cdots\oplus \Pol{d_r}\cdot y_r,\]
such that $d_1+\deg y_1<\cdots < d_r+\deg y_r$; furthermore, since $\deg x_s$ is maximal over all $x\in V_{\leq s}$, and so is $t^{d_r-1}y_r$, we have that $d_r+\deg y_r=\deg x_s+1<\deg tx_\ell$. If $\abs{V_{\leq s}+t V_{\leq s}}<q^{r}\abs{V_{\leq s}}$ then by induction $V_{\leq s}$ has strong arithmetic dimension of less than $r$, which is a contradiction as noted above. It follows that $\abs{V_{\leq s}+t V_{\leq s}}=q^r\abs{V_{\leq s}}$ and hence, since the $\dim V_{\leq s}+r$ many elements
\[\{ t^{n_i}y_i : 1\leq i\leq r, 0\leq n_i\leq d_i\}\]
span the $\bbf_q$-vector space $V_{\leq s}+t V_{\leq s}$, they are also linearly independent over $\bbf_q$. In particular, if 
\[\alpha_1t^{d_1}y_1+\cdots+\alpha_rt^{d_r}y_r\in V_{\leq s}\]
with $\alpha_i\in\bbf_q$ then we must have $\alpha_i=0$ for $1\leq i\leq r$. 

We now use the hypothesis $\abs{V+t V}\leq q^r\abs{V}$ to observe that the $\dim V+r+1$ elements 
\[\{ t^{n_i}y_i : 1\leq i\leq r, 0\leq n_i\leq d_i\}\cup \{ x_{s+1},\ldots,x_\ell\} \cup \{ tx_\ell\}\subset V+t V\]
are linearly dependent over $\bbf_q$. Since the degree of $tx_\ell$ is strictly larger than that of all elements of $V\cup t V_{\leq s}$ we in fact have that the set
\[\{ t^{n_i}y_i : 1\leq i\leq r, 0\leq n_i\leq d_i\}\cup \{ x_{s+1},\ldots,x_\ell\} \subset V+t V\]
is linearly dependent over $\bbf_q$. Therefore there must exist $\alpha_i\in\bbf_q$ for $1\leq i\leq r$, not identically zero, such that
\[z=\alpha_1t^{d_1}y_1+\cdots+\alpha_rt^{d_r}y_r\in V.\]
If $\alpha_r=0$ then $\deg z\leq d_{r-1}+\deg y_{r-1}< d_r+\deg y_r=\deg x_s+1$, and hence $z\in V_{\leq s}$, which contradicts the above. Hence we must have $\deg z=d_r+\deg y_r=\deg x_s+1$, and hence $z\in V_{\leq s+1}$. Let $1\leq i\leq r$ be such that $\alpha_i\neq 0$ and $d_i$ is minimal, and let $z=t^{d_i}y$, say. In particular we have that $t^jy\in V_{\leq s}$ for $0\leq j<d_i$ and $t^{d_i}y\in V_{\leq s+1}$. We claim that $V_{\leq s+1}$ has strong arithmetic dimension $r$, with a suitable decomposition provided by 
\begin{equation}
\label{vecspa}
\Pol{d_1}\cdot y_1\oplus\cdots\oplus \Pol{d_{i-1}}\cdot y_{i-1}\oplus \Pol{d_{i+1}}\cdot y_{i+1}\oplus\cdots\oplus \Pol{d_r}\cdot y_r\oplus\Pol{d_i+1}\cdot y.
\end{equation}

This contradicts the maximality of $s$ and completes the proof. The vector space \eqref{vecspa} is contained in $V_{\leq s+1}$, and since $\dim V_{\leq s+1}=\dim V_{\leq s}+1$, comparing dimensions shows that the vector spaces are equal, provided only that this sum is indeed direct. If the sum is not direct then we have $a_j\in \Pol{d_j}$, not identically zero, and $\beta\in\bbf_q$ such that
\[a_1y_1+\cdots+a_{i-1}y_{i-1}+a_{i+1}y_{i+1}+\cdots+a_ry_r+a_iy+\beta t^{d_i}y=0.\]
If $\beta=0$ then this contradicts the orthogonality of the original decomposition of $V_{\leq s}$, and so $\beta\neq 0$. Since the degree of the final summand is $d_i+\deg y=\deg z=d_r+\deg y_r$ which is strictly larger than the degree of all the other summands, the left hand side cannot be zero, which is a contradiction. Finally, the fact that this decomposition is a witness to $V_{\leq s+1}$ having strong arithmetic dimension $r$ follows from the fact that $d_i+1+\deg y=\deg z+1>d_r+\deg y_r$.
\end{proof}

\section{Transcendental dilates}\label{sec-dil}

We now apply the inverse theorem to a function field analogue of the following problem, first considered by Konyagin and {\L}aba \cite{KoLa:2006}. Let $A$ be a finite subset of $\bbr$ and $\xi$ be any transcendental element; must $\abs{A+\xi A}$ be large? Konyagin and {\L}aba proved that $\abs{A+\xi A}\gg (\log \abs{A})^{1-o(1)}\abs{A}$. Sanders \cite{Sa:2008a} later observed that, using an argument of Bourgain, such lower bounds can be obtained by combining simple modelling arguments with an inverse sumset result. Using such an argument with the sharpest known form of such an inverse result, Sanders \cite{Sa:2012b} improved this lower bound to
\[\abs{A+\xi A}\gg \exp(\Omega((\log \abs{A})^c))\abs{A}\]
for some absolute constant $c>0$. An example by Green, given in \cite{KoLa:2006}, shows that this is almost the best possible result. Namely, if one takes $A=\{ \sum_{i=1}^ma_i\xi^i : 1\leq a_i\leq n\}$ for suitable choices of $n$ and $m$ then one can show that 
\[\abs{A+\xi A}\ll \exp(O((\log \abs{A})^{1/2}))\abs{A}.\]

This problem over $\bbr$ has now been completely resolved by Conlon and Lim \cite{CL}, who prove that 
\[\abs{A+\xi A}\gg \exp(\Omega((\log \abs{A})^{1/2}))\abs{A}.\]
The proof of Conlon and Lim does not use an inverse sumset result, instead arguing geometrically and using compressions.

Over function fields the appropriate analogue of $\bbr$ is the field of Laurent series
\[\fk = \left\{ \sum_{n=-\infty}^k a_n t^n : a_n\in \mathbb{F}_p\textrm{ and }k\in \mathbb{Z}\right\},\]
which is the completion of the rational function field $\bbf_p(t)$. Since transcendence over $\bbf_p[t]$ is the obvious analogue of transcendence over $\bbz$, one might hope for similar lower bounds to the above to hold for $\abs{A+\xi A}$ when $A$ is any finite subset of $\fk$ and $\xi\in\fk$ is any element transcendental over $\bbf_p[t]$. 

A moment's thought shows that this is too ambitious; indeed, the analogue of the example outlined above already dashes our hopes. In particular, if $A=\{ \sum_{i=0}^na_i\xi^i : a_i\in \bbf_p\}$ then it is easy to show that $\abs{A+\xi A}\leq p\abs{A}$. Recalling that we take $\bbf_p$ to be fixed, this is essentially a constant upper bound, and hence no non-trivial lower bound can be given.

On examination of this example, however, some hope returns -- since such a set is contained in $\bbf_p[\xi]$ and $\xi$ is transcendental over $\bbf_p[t]$ we have $\abs{A+t A}\gg \abs{A}^2$. Thus one might hope that if $A\subset \fk$ does not grow when added to its dilation by some transcendental element, then this forces growth when added to its dilation by $t$.

The construction above is easily adapted to such a situation. Consider the set 
\[A=\left\{\sum_{i=1}^na_i\xi^i : a_i\in \bbf_p[t]\textrm{ and }\deg a_i <m\right\}.\]
It is easy to show that $\abs{A}=p^{nm}$ and $\abs{A+t A}=p^n\abs{A}$. Furthermore, $\abs{A+\xi A}=p^m\abs{A}$. It follows that if $\abs{A+t A}=K_1\abs{A}$ and $\abs{A+\xi A}=K_2\abs{A}$ then 
\[(\log K_1)(\log K_2)\asymp \log\abs{A}.\]

This should be compared to the case $A\subset \bbr$, when we study the single parameter $K$ given by $\abs{A+\xi A}=K\abs{A}$ and Green's construction gives a set $A$ with $(\log K)^2\approx \log \abs{A}$. Thus we see that in the analogous situation in $\bbf_p[t]$ there is a `splitting' of the parameter $K$ into two distinct parameters, and we may now ask for non-trivial lower bounds on the size of such parameters.

Combining Theorem~\ref{th-pfrff} with the argument of Sanders \cite{Sa:2008a} (which Sanders in turn attributes to Bourgain) we are able to prove an optimal such result. 

\begin{theorem}
Let $A\subset \fk$ be a finite set. If $\xi_1$ and $\xi_2$ are algebraically independent over $\bbf_p$  and $K_1,K_2\geq 2$ are such that
\[\abs{A+\xi_1 A}\leq K_1\abs{A}\textrm{ and }\abs{A+\xi_2 A}\leq K_2\abs{A}\]
then
\[(\log K_1)(\log K_2)\gg \log \abs{A}.\]
\end{theorem}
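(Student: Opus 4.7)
The plan is to follow the Bourgain--Sanders strategy, combining Corollary~\ref{cor-pfrff} with a dimensional bound that exploits the algebraic independence of $\xi_1,\xi_2$ over $\bbf_p$. Without loss of generality assume $K_1\leq K_2$. By the Pl\"unnecke--Ruzsa inequality $\abs{A+A}\ll K_1^{O(1)}\abs{A}$, so applying \cite[Theorem 1.3]{GGMT} to $A$ viewed as a subset of the $\bbf_p$-vector space $\fk$, and then bounding $d[U_H;U_{\xi_i H}]\ll \log K_i$ via the Ruzsa triangle inequality in entropic form (using $K_1\leq K_2$ to absorb the extra $\log K_1$ factor into $\log K_2$), produces a finite $\bbf_p$-vector subspace $H\subseteq \fk$ with $\abs{H}\ll K_1^{O(1)}\abs{A}$ and $\abs{H+\xi_i H}\ll K_i^{O(1)}\abs{H}$ for $i=1,2$. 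I would then pass to $V := H\cap \xi_1 H$, which by the subgroup sum formula satisfies $\abs{V}\gg K_1^{-O(1)}\abs{H}$; the containments $V+\xi_1 V\subseteq \xi_1 H$ (since $\xi_1 H$ is an $\bbf_p$-subspace) and $V+\xi_2 V\subseteq H+\xi_2 H$ give $\abs{V+\xi_i V}\leq p^{k_i}\abs{V}$ with $k_i=O(\log K_i)$ for both $i$.

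The heart of the argument is then the following dimensional bound: \emph{any finite $\bbf_p$-vector subspace $V\subseteq\fk$ satisfying $\abs{V+\xi_i V}\leq p^{k_i}\abs{V}$ for $i=1,2$, where $\xi_1,\xi_2$ are algebraically independent over $\bbf_p$, has $\dim_{\bbf_p} V\ll k_1 k_2$.} To prove this, I would extend Theorem~\ref{th:vecspace} (with $\xi_1$ in place of $t$ and $\fk$ in place of $\bbf_q[t]$) to decompose $V$ as a generalized $\bbf_p[\xi_1]$-progression $V=\bigoplus_{j=1}^{r}[d_j]\xi_1\cdot y_j$ of rank $r\leq k_1$, and then use the $\xi_2$-doubling hypothesis together with the algebraic independence of $\xi_1,\xi_2$ to bound each length $d_j\ll k_2$. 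For the length bound, consider the longest summand $V_{j^*}$: since no non-trivial finite $\bbf_p$-subspace of $\fk$ is $\xi_2$-stable (by algebraic independence), the $\xi_2$-orbit of $y_{j^*}$ must eventually leave the $\bbf_p$-span of the ``top'' generators $\{y_k:d_k=d_{j^*}\}$, forcing $\xi_2 V_{j^*}$ to contribute $\Omega(d_{j^*})$ new dimensions to $V+\xi_2 V$, giving $d_{j^*}\ll k_2$. Summing yields $\dim V=\sum_j d_j\ll k_1 k_2$, whence
\[
\log\abs{A}\leq \log\abs{V}+O(\log K_1)\ll k_1 k_2+\log K_1\ll (\log K_1)(\log K_2).
\]

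The main obstacle is the dimensional bound. Extending Theorem~\ref{th:vecspace} to the Laurent series setting requires handling a dilation $\xi_1$ that need not admit a canonical degree function compatible with multiplication; one may reduce to the case $\deg_t\xi_1>0$ by replacing $\xi_1$ with $t^N\xi_1$ for suitable $N$, but the sumset bounds must then be tracked carefully. The length bound $d_j\ll k_2$ is the most delicate step, requiring a structural argument that tracks how the $\xi_2$-orbits of the $\bbf_p[\xi_1]$-progression generators $y_j$ interact with one another across summands of differing lengths, and quantitatively exploits the algebraic independence of $\xi_1,\xi_2$ over $\bbf_p$ to rule out excessive overlap.
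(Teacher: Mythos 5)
Your approach diverges genuinely from the paper's, and you have correctly identified the reduction to a finite $\bbf_p$-subspace $V\subseteq\fk$ with $\abs{V+\xi_i V}\leq p^{k_i}\abs{V}$, $k_i = O(\log K_i)$, $\log\abs{V}\geq\log\abs{A}-O(\log K_1)$. However, the heart of your argument --- the claimed dimensional bound $\dim_{\bbf_p} V\ll k_1k_2$ --- is not proved, and the two supporting steps you outline both have real gaps. The paper takes an entirely different route: after a Freiman-isomorphism ``flattening'' of $A\cup\xi_1A$ into $\bbf_p[\xi_2]$, it applies Theorem~\ref{th-pfrff} with $\xi_2$ in the role of $t$, extracts by pigeonhole a single rank-one progression $Q'=x+y\cdot\bbf_p[\xi_2]_{\deg<m}$ of size roughly $(K_2^{-O(1)}\abs{A})^{1/O(\log K_2)}$ sitting inside $A-A+\xi_1A-\xi_1A+X$, and then exploits the transcendence of $\xi_1$ over $\bbf_p[\xi_2]$ to force $\abs{Q'+\xi_1^2Q'+\cdots+\xi_1^{2(l-1)}Q'}\geq\abs{Q'}^l$, which against the Pl\"{u}nnecke--Ruzsa upper bound $K_1^{O(l)}\abs{A}$ closes the argument upon choosing $l\asymp\log K_2$. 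The algebraic independence enters the paper's proof through this ``independent dilates'' product growth, not through a structural classification of two-parameter approximate modules.

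Two concrete problems with your sketch. First, the extension of Theorem~\ref{th:vecspace} from $\bbf_q[t]$ to $(\fk,\xi_1)$ requires a degree function under which multiplication by $\xi_1$ strictly increases degree; replacing $\xi_1$ by $t^N\xi_1$ does not preserve the hypothesis $\abs{V+\xi_1 V}$ being small (there is no control on $\abs{V+tV}$), so that particular reduction fails. (This is fixable by instead using $\xi_1-c$ for $c$ the unit-degree coefficient, or $(\xi_1-c)^{-1}$, together with the Laurent-series valuation, but that is not what you wrote.) Second and more seriously, the claim that the longest summand of a $\bbf_p[\xi_1]$-progression decomposition must contribute $\Omega(d_{j^*})$ new dimensions to $V+\xi_2V$ is asserted, not proved. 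The observation that the $\xi_2$-orbit of $y_{j^*}$ eventually leaves the span of the top generators does not by itself produce $\Omega(d_{j^*})$ fresh dimensions: one must rule out cancellations in which $\xi_2V_{j^*}$ lies almost entirely inside $V$ while the ``escape'' contributes only $O(1)$ dimensions, and the decomposition from Theorem~\ref{th:vecspace} is far from unique, so ``the'' $y_j$ and $d_j$ are not canonical. What you would effectively need is a two-variable analogue of Theorem~\ref{th:vecspace}, which is a substantial structural result that neither the paper nor your sketch establishes. You acknowledge this is the delicate step; as it stands, it is a genuine gap, and I recommend looking at the one-dimensional-progression plus iterated-dilation argument, which sidesteps the need for any such two-parameter classification.
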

A similar conclusion may be obtained with $\bbf_q$ in place of $\bbf_p$, using Theorem~\ref{th-gen} instead of Theorem~\ref{th-pfrff}, but then an additional hypothesis such as $\abs{A+\lambda \xi_i A}\leq K_i^{O(1)}\abs{A}$ for all $\lambda\in \bbf_q$ and $i=1,2$ is required.

\begin{proof}
Without loss of generality, we may suppose that $K_2\geq K_1$ and $0\in A$, so that $A'=A\cup\xi_1 A\subseteq A+\xi_1 A$. Since $A'$ is finite there exists some $d\geq 1$ and $v_1,\ldots,v_d\in \fk$ linearly independent over $\bbf_p[\xi_2]$ with
\[A'\subseteq \left\{ x_1v_1+\cdots+x_dv_d : x_i\in \bbf_p[\xi_2]\right\}.\]
For some large integer $N$ we consider the map $f:\bbf_p[\xi_2]^d\cdot v \to \bbf_p[\xi_2]$ defined by
\[f(x_1v_1+\cdots+x_dv_d)=x_1+x_2\xi_2^N+\cdots+x_d\xi_2^{N(d-1)}.\]
Provided $N$ is large enough (depending on $\xi_1,\xi_2,A$) this is an injection on $A'$ and so if $A''=f(A')$ then $\abs{A''}=\abs{A'}\asymp \abs{A}$. Clearly
\[x+y=x'+y' \textrm{ implies }f(x)+f(y)=f(x')+f(y'),\]
and hence if $A''=f(A')$ then
\[\abs{A''+A''}\leq \abs{A'+A'}\leq \abs{A+A+\xi_1A+\xi_1A}\leq K_1^{O(1)}\abs{A''}.\]
Similarly
\[\abs{A''+\xi_2A''}\leq \abs{A'+\xi_2A'}\leq \abs{A+\xi_1 A+\xi_2 A+\xi_1\xi_2 A}\leq K_2^{O(1)}\abs{A''}.\]
By Theorem~\ref{th-pfrff} (applied with the transcendental $\xi_2$ playing the role of the transcendental $t$) there exists some $\bbf_p[\xi_2]$-arithmetic progression $P\subseteq \langle A''\rangle$ of rank $O(\log K_2)$ and size $\geq K_2^{-O(1)}\abs{A}$ such that $P\subseteq A''-A''+X$ for some set $X\subseteq P$ of size $\abs{X}\leq K_1^{O(1)}$. By the pigeonhole principle there exists some $x,y\in \bbf_p[\xi_2]$ and $m\geq 1$ such that
\[Q = x+y\cdot \bbf_p[\xi_2]_{\deg <m}\]
has size $\gg (K_2^{-O(1)}\abs{A})^{1/K'}$ for some $K'\ll \log K_2$ and $Q\subseteq A''-A''+X$. We define the inverse map $g:\bbf_p[\xi_2]\to \bbf_p[\xi_2]^d\cdot v$ by 
\[g(x_1+x_2\xi_2^N+\cdots+x_d\xi_2^{N(d-1)}+x_{d+1}\xi_2^{Nd})=x_1v_1+\cdots+x_dv_d,\]
where each $x_i$ (as a polynomial in $\bbf_p[\xi_2]$) has degree $<N$ for $1\leq i\leq d$. This is a well-defined map on $\{ x\in \bbf_p[\xi_2] : \deg x < N(d+1)\}$, which contains $Q$ for large enough $N$, and we can choose $N$ large enough such that 
\[g(Q)= g(x)+g(y)\cdot \bbf_p[\xi_2]_{\deg <m}\]
and
\[Q'=g(Q)\subseteq g(A''-A''+X)\subseteq A-A+\xi_1 A-\xi_1 A+g(X).\]
In particular, for any $l\geq 1$,
\[\Abs{Q'+\xi_1^2\cdot Q'+\cdots+\xi_1^{2(l-1)}\cdot Q'}\leq K_1^{O(l)}\Abs{(A-A)+\xi_1 (A-A)+\cdots+ \xi_1^{2l-1}(A-A)}.\]
Since $\xi_1$ is transcendental over $\bbf_p[\xi_2]$ and $Q'$ is a translate and dilate of a subset of $\bbf_p[\xi_2]$, however, the left hand side is at least $\abs{Q'}^{l}$. Furthermore, by Pl\"{u}nnecke-Ruzsa sumset estimates (as in \cite[Lemma 6.1]{Sa:2008a}) the right hand side is at most $K_1^{O(l)}\abs{A}$. It follows that 
\[K_2^{-O(l/K')}\abs{A}^{O(l/K')-1}\leq K_1^{O(l)}.\]
Choosing $l$ to be some large multiple of $K'\ll\log K_2$ we deduce that 
\[\log \abs{A} \ll (\log K_1)(\log K_2)\]
as required.
\end{proof}

We conclude by conjecturing the analogous result for subsets of the real numbers. 

\begin{conjecture}\label{conj}
Let $A\subset\bbr$ be a finite set. If $\xi\in \bbr$ is transcendental and $K_1,K_2\geq 2$ are such that
\[\abs{A+A}\leq K_1\abs{A}\textrm{ and }\abs{A+\xi A}\leq K_2\abs{A}\]
then
\[(\log K_1)(\log K_2)\gg \log \abs{A}.\]
\end{conjecture}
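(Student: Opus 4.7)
The plan is to follow the Bourgain--Sanders strategy used in the proof of the main theorem of Section~\ref{sec-dil}, replacing Theorem~\ref{th-pfrff} by a hypothetical polynomial Freiman--Ruzsa theorem over $\bbz$. The argument would proceed in three stages: (i) apply an inverse theorem using the hypothesis $\abs{A+A}\leq K_1\abs{A}$ to extract a long one-dimensional arithmetic progression $Q\subseteq A-A+X'$ for some small set $X'$; (ii) use transcendence of $\xi$ over $\bbq$ to show that $\abs{Q+\xi Q+\cdots+\xi^{l-1}Q}=\abs{Q}^l$; (iii) upper bound this iterated sumset via Pl\"{u}nnecke--Ruzsa estimates using $\abs{A+\xi A}\leq K_2\abs{A}$, and combine the two bounds.

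More concretely, after embedding the $\bbz$-module generated by $A\cup \xi A\cup\cdots\cup\xi^{l-1}A$ (a finitely generated torsion-free abelian group) into $\bbz^d$ for some $d$, the hypothesis $\abs{A+A}\leq K_1\abs{A}$ would, under a polynomial Freiman--Ruzsa conjecture over $\bbz$, produce a (convex) progression $P$ of rank $O(\log K_1)$ with $\abs{P}\geq K_1^{-O(1)}\abs{A}$ and $P\subseteq A-A+X'$ for some $X'$ of size $\abs{X'}\leq K_1^{O(1)}$. A pigeonhole argument along one of the axes of $P$ (mirroring the one-dimensional reduction in the proof of the theorem of Section~\ref{sec-dil}) yields a one-dimensional arithmetic progression $Q\subseteq A-A+X'$ of length $\abs{Q}\gg \abs{A}^{c/\log K_1}$ for some absolute $c>0$. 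Since $\xi$ is transcendental over $\bbq$ and $Q$ is a translated dilate of a subset of $\bbz$, any equality $\sum_{i=0}^{l-1}\xi^iq_i=\sum_{i=0}^{l-1}\xi^iq_i'$ with $q_i,q_i'\in Q$ would give a nontrivial polynomial relation on $\xi$; hence $\abs{Q+\xi Q+\cdots+\xi^{l-1}Q}=\abs{Q}^l$. On the other hand, the containment $Q\subseteq A-A+X'$ together with iterated Pl\"{u}nnecke--Ruzsa estimates in the spirit of \cite[Lemma~6.1]{Sa:2008a} bounds this sumset above by $(K_1K_2)^{O(l)}\abs{A}$. Choosing $l$ to be a sufficiently large constant multiple of $\log K_1$ then forces $\log\abs{A}\ll (\log K_1)(\log K_2)$, as required.

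The main obstacle is concentrated entirely in the inverse theorem used at stage (i). A polynomial Freiman--Ruzsa theorem over $\bbz$ is a notorious open problem: as observed by Lovett and Regev \cite{LR17}, the cleanest formulation in terms of generalised arithmetic progressions is already false over $\bbz$, and one must instead work with convex progressions, for which polynomial bounds remain unknown. Both the methods of \cite{GGMT} in the $\bbf_p^n$ setting and the rigid $\bbf_q$-vector space argument behind Theorem~\ref{th:vecspace} in this paper rely crucially on ambient linear structure that has no analogue over $\bbz$. If one substitutes Sanders's quasi-polynomial inverse theorem for the conjectured polynomial version, the argument sketched above yields only the unconditional bound $\log\abs{A}\ll (\log K_1)^C(\log K_2)$ for some absolute $C>1$, recovering the bound of \cite{Sa:2012b} in the regime $K_1\asymp K_2$.
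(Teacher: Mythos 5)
This is a conjecture: the paper does not prove it, and you rightly do not claim to either. Your conditional strategy --- apply an inverse theorem to $\abs{A+A}\leq K_1\abs{A}$ to extract a long one-dimensional progression $Q\subseteq A-A+X'$, use transcendence of $\xi$ to blow up $Q+\xi Q+\cdots+\xi^{l-1}Q$ to $\abs{Q}^l$, and bound the same sumset above via Pl\"{u}nnecke--Ruzsa with $\abs{A+\xi A}\leq K_2\abs{A}$ --- is precisely what the paper's closing paragraph envisages, and your observation that Sanders's quasi-polynomial Bogolyubov--Ruzsa theorem already yields $(\log K_1)(\log K_2)\gg(\log\abs{A})^c$ unconditionally matches the author's own remark.

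There is, however, a second gap in your stage~(i), beyond the (correctly flagged) open status of PFR over $\bbz$, and it is exactly the one the paper points to when it says the function-field argument ``does not immediately generalise.'' Any polynomial-strength inverse theorem over $\bbz$ must output a \emph{convex} progression --- a linear image of $B\cap\bbz^r$ for a symmetric convex body $B$ --- rather than a generalised arithmetic progression, as Lovett and Regev's counterexample forces. A convex progression has no canonical axes, so the phrase ``pigeonhole along one of the axes of $P$'' does not apply; one would instead need a lattice-geometric lemma (via, say, Minkowski's second theorem on successive minima) showing that a proper convex progression of rank $r$ and size $N$ contains a genuine one-dimensional arithmetic progression of length $\gg N^{1/r}$, with implied constants at worst $\exp(O(r))=K_1^{O(1)}$. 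This is plausible but is not a routine pigeonhole, and you assert it as one. Until both this extraction lemma and PFR over $\bbz$ are established, the conjecture remains open, as you conclude.
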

The Pl\"{u}nnecke-Ruzsa sumset inequalities imply that $\log K_1\ll \log K_2$, and hence this conjecture is stronger than the bound $\log K_2 \gg (\log \abs{A})^{1/2}$ of Conlon and Lim \cite{CL}.

The quasi-polynomial Bogolyubov-Ruzsa bounds of Sanders \cite{Sa:2012b} and a generalisation of the argument in \cite{Sa:2008a} (along similar lines to the previous proof) would give a lower bound of the shape $(\log K_1)(\log K_2)\gg (\log \abs{A})^c$ for some constant $c>0$. It is possible (although this is not immediately clear) that an adaptation of the argument of Conlon and Lim \cite{CL} would suffice to prove Conjecture~\ref{conj}. Similarly, it is likely that the polynomial Freiman-Ruzsa conjecture over the integers would imply Conjecture~\ref{conj}, although the fact that strong polynomial bounds would require not just generalised arithmetic progressions but more general convex progressions (see \cite{LR17}) mean that the argument above does not immediately generalise.

\subsection*{Acknowledgements}
The author is currently funded by a Royal Society University Research Fellowship, and when much of this work was done was supported by an EPSRC doctoral training grant. We would like to thank our PhD supervisor Trevor Wooley for the suggestion to explore additive combinatorics in function fields. We thank David Conlon, Akshat Mudgal, Will Sawin, and Terence Tao for helpful remarks and corrections to an early draft of this paper. Finally, we thank the anonymous referee for a careful reading of this paper and corrections.
\bibliographystyle{amsplain}

\begin{dajauthors}
\begin{authorinfo}[tb]
  Thomas F. Bloom\\
  Department of Mathematics\\
  University of Manchester\\
  Manchester, United Kingdom\\
  \texttt{thomas.bloom@manchester.ac.uk}
\end{authorinfo}
\end{dajauthors}

\end{document}